\documentclass[11pt,oneside, english]{article}
\usepackage{dsfont}
\usepackage{amsmath, amssymb, amsthm, verbatim,enumerate, bbm}
\usepackage{enumerate}
\usepackage[utf8x]{inputenc}
\usepackage[T1]{fontenc}
\usepackage[english]{babel}
\usepackage{mathtools}
\usepackage[
letterpaper,
top=0.8in,
bottom=0.8in,
left=1in,
right=1in]{geometry}
\usepackage{bm} %
\usepackage[colorlinks=true, allcolors=blue]{hyperref}

\usepackage{graphicx}

\graphicspath{{./figures/}}
\usepackage{epsfig}
\usepackage{fullpage}
\usepackage{mathrsfs}
\usepackage{enumerate}
\usepackage{verbatim}
\usepackage{mhequ}
\usepackage{algorithm}
\usepackage{algorithmicx}
\usepackage{algpseudocode}
\usepackage[nameinlink,capitalise,noabbrev]{cleveref}

\usepackage{showlabels}
\numberwithin{equation}{section}

\def \be{\begin{equs}}
	\def \ee{\end{equs}}

\def \be{\begin{equs}}
\def \ee{\end{equs}}

\newtheorem{theorem}{Theorem}[section]

\newtheorem*{namedtheorem}{\theoremname}
\newcommand{\theoremname}{testing}

\newtheorem{lemma}[theorem]{Lemma}

\newtheorem{proposition}[theorem]{Proposition}

\newtheorem{corollary}[theorem]{Corollary}

\newtheorem*{question*}{Question}

\theoremstyle{definition}
\newtheorem{definition}[theorem]{Definition}

\theoremstyle{plain}

\title{A PHASE TRANSITION FOR REPEATED K-AVERAGES}

\author{
    Rohit Chaudhuri\thanks{Indian Statistical Institute, Kolkata}
    }

\date{}

\begin{document}

\pagestyle{empty}

\maketitle
\begin{abstract}
    Let $x_1,\dots,x_{n}$ be a fixed sequence of real numbers. At each stage, pick $k$ integers $\{I_{i}\}_{1\leq i \leq k}$ uniformly at random without replacement and then for each $i \in \{1,2,\dots,k\}$ replace $x_{I_i}$ by $(x_{I_1}+x_{I_2}+\dots+x_{I_k})/k$. It is easy to observe that all the co-ordinates converge to $(x_1+\dots+x_n)/n$. In this article, we extend the result of \cite{chatterjee2019note} by establishing the mixing time to be in between $\frac{n}{k \log k}\log n$ and $\frac{n}{k-1}\log n$.
\end{abstract}
\section{Main Result}
In this article, we will mostly focus on the initial condition $x_{0}=(1,0,\dots,0)$. We denote the $L^{1}$ distance from convergence by $T(k)=\sum_{i=1}^{n}|x_{k,i}-\bar{x}_{0}|.$ The next theorem formally describes the ``mixing behaviour" of the $L^{1}$ metric.
\begin{theorem}\label{thm:1}
For $x_{0}=(1,0,\dots,0),$ in probability as $n \to \infty$ we have $$T(\theta n \log n) \to 2$$ for any $\theta < \frac{1}{k \log k}$, and $$T(\theta n \log n) \to 0$$ for any $\theta > \frac{1}{k-1}$.
\end{theorem}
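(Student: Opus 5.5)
Two separate arguments: a lower bound (no mixing before $\theta<\frac{1}{k\log k}$) from the support of $x_t$, and an upper bound (mixing after $\theta>\frac{1}{k-1}$) from an $L^2$ computation combined with Cauchy--Schwarz. Throughout, $\bar x_0 = 1/n$ and $\sum_i x_{t,i}=1$ for all $t$.

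\textbf{Lower bound.} Let $S_t=\{i: x_{t,i}>0\}$, so $S_0=\{1\}$. A step can enlarge the support only if the chosen $k$-set meets $S_t$, and then $|S_{t+1}|\le |S_t|+(k-1)$. More precisely, if the $k$-set contains $j\ge1$ elements of $S_t$, the support grows by $k-j$. Hence $\mathbb{E}[|S_{t+1}|-|S_t| \mid S_t]\le (k-1)\,\mathbb{P}(\text{set meets } S_t)\le (k-1)\cdot k|S_t|/n$. Integrating this heuristically gives $|S_t|\lesssim e^{k(k-1)t/n}$, which only yields a threshold of order $1/k^2$. To reach $\frac{1}{k\log k}$ I will instead track the \emph{number of distinct nonzero values}, or equivalently count how many times each unit of mass can be split. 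Consider the genealogy: follow a coordinate backwards in time. Each time a coordinate is updated it becomes the average of $k$ coordinates, so $x_{t,i}$ is determined by the ancestral tree. A coordinate $i$ is positive only if its backward process hits coordinate $1$. The backward process is a branching structure in which each coordinate is touched at rate $k/n$ per step and branches into $k$. So the expected number of positive coordinates is at most $\mathbb{E}[\#\text{lineages}]$, and this grows like $k^{(\#\text{touches})}$, where the number of touches per lineage is about $kt/n$. Thus $|S_t|\le n^{o(1)}$ whenever $(kt/n)\log k<(1-\epsilon)\log n$, i.e.\ $\theta<\frac{1}{k\log k}$. Formally, I will bound $\mathbb{E}|S_t|\le (1+\tfrac{k(k-1)}{n}\cdots)$ via a first-moment count of paths, and then apply Markov's inequality to get $|S_t|=o(n)$ with high probability. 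Once that holds, $T(t)\ge \sum_{i\notin S_t} 1/n + \sum_{i\in S_t}(x_{t,i}-1/n) = 2-2|S_t|/n\to 2$.

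\textbf{Upper bound.} Let $y_t = x_t-\bar x_0\mathbf 1$ and $Q_t=\|y_t\|_2^2$. Averaging a block $B$ of size $k$ replaces $\sum_{i\in B} y_i^2$ by $\frac1k(\sum_{i\in B} y_i)^2$. Taking expectation over a uniform $k$-set, and using $\sum_i y_i=0$, a direct second-moment computation gives
\[
\mathbb{E}[Q_{t+1}\mid y_t]=\Bigl(1-\tfrac{k-1}{n-1}\Bigr)Q_t .
\]
So $\mathbb{E}Q_t\le e^{-(k-1)t/n}$ with $Q_0\le 1$. Cauchy--Schwarz gives $T(t)\le \sqrt{n Q_t}$, hence $\mathbb{E}T(t)\le \sqrt n\, e^{-(k-1)t/(2n)}$. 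This tends to zero only for $\theta>\frac{1}{2(k-1)}\cdot$\dots; more carefully, $\sqrt{n}\,n^{-\theta(k-1)/2}\to0$ requires $\theta>\frac{1}{k-1}$, which matches the stated threshold exactly. Markov's inequality then gives $T\to0$ in probability.

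\textbf{Main obstacle.} I expect the difficulty to be the lower bound, specifically making the branching/genealogy first-moment bound rigorous. Without replacement the touches are dependent, and I need the growth factor per touch to be $k$ rather than $1+k(k-1)$, which is what a crude expectation gives. To handle this I would use the dual picture: $x_{t,i}=\mathbb{P}(\text{random backward walk from } i \text{ ends at } 1)$, where at each touch the walk moves to a uniform member of the block. Then $x_{t,i}>0$ requires some backward path of length $\approx \mathrm{Bin}(t,k/n)$ touches reaching $1$. I would count the expected number of such paths as $\le \mathbb{E}[k^{N}]\cdot\frac{1}{n}\cdot n$ with $N\sim \mathrm{Bin}(t,k/n)$, choosing the moment carefully (with a large-deviation truncation on $N$) to get $n^{o(1)}$ below the threshold.
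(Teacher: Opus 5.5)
\textbf{Upper bound.} Your upper bound is correct and matches the paper's $L^2$ computation: your conditional identity is \cref{prop:1}, and Jensen plus Cauchy--Schwarz give $\mathbb{E}\,T(t)\le n^{(1-\theta(k-1))/2}$.

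\textbf{Lower bound: the gap.} The lower bound does not work as planned, because the quantity you try to control is not small in the regime you need. The support $S_t$ is exactly an SI epidemic: $S_{t+1}=S_t\cup B_t$ whenever the chosen block $B_t$ meets $S_t$. While $|S_t|=o(n)$ it grows like a Yule process, $|S_t|\approx W e^{k(k-1)t/n}$ with $W>0$ a.s. It then saturates, so $|S_t|/n\to 1$ in probability for every $\theta>\frac{1}{k(k-1)}$. Since $\log k<k-1$, the window $\frac{1}{k(k-1)}<\theta<\frac{1}{k\log k}$ is nonempty for every $k\ge 2$ (for $k=2$ it is $(\tfrac12,\tfrac{1}{2\log 2})$). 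On that window your bound $T\ge 2-2|S_t|/n$ is vacuous.

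Your first-moment path count reproduces this obstruction rather than escaping it. A forward path with $N$ touches has quenched probability $k^{-N}$, so the number of paths equals $\mathbb{E}[k^N\mid\text{environment}]$. Under the annealed law $N\sim\mathrm{Bin}(t,k/n)$ exactly, so the mean number of paths is $(1+\frac{k(k-1)}{n})^t\approx n^{k(k-1)\theta}$. Your heuristic $k^{kt/n}=k^{\mathbb{E}N}$ is an undercount by Jensen. Truncating $N$ cannot rescue a support bound, since a single atypical path with many touches already makes $x_{t,i}>0$. Sampling without replacement is not the obstruction. (Also, even heuristically, $n^{k\theta\log k}$ is $n^{1-\delta}$, not $n^{o(1)}$.)

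\textbf{The fix: bound mass, not support.} Your dual-walk ingredients then suffice. Each averaging matrix is symmetric and stochastic, so $x_t$ is the quenched law of the forward walk $W$ from $1$ that jumps to a uniform element of $B_s$ whenever $W_s\in B_s$. Write $x_t(U)=\sum_{i\in U}x_{t,i}$. Given the environment, paths with at most $m$ touches are coded injectively by choice sequences in $\bigcup_{j\le m}[k]^j$. Hence their endpoints form a set $A$ with $|A|\le 2k^m$ and $x_t(A)\ge 1-\mathbb{P}(N>m\mid\text{environment})$. Since $T(t)=2\max_{U\subseteq[n]}\bigl(x_t(U)-|U|/n\bigr)$, this gives $T(t)\ge 2\bigl(1-\mathbb{P}(N>m\mid\text{environment})-2k^m/n\bigr)$.

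Choose $m=(1+\epsilon)k\theta\log n$ with $(1+\epsilon)k\theta\log k<1$. Then $k^m/n\to 0$, while $\mathbb{E}\,\mathbb{P}(N>m\mid\text{environment})=\mathbb{P}(\mathrm{Bin}(t,k/n)>m)\to 0$ by Chebyshev. Markov's inequality then yields $T\to 2$ in probability. This is exactly where $\log k$ (entropy per touch) replaces $k-1$ (new sites per touch). The paper's text only supplies the $L^2$ half, so this part has to be argued independently.
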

\section{PROOFS}
Define a Markov chain $\{x_m\}^{\infty}_{m=0}$ as in the previous section. Without loss of generality, we safely can assume that $\bar{x}_{0}=0.$ We will establish our results under this assumption.
\subsection{Decay of expected $L^{2}$ distance}
For each $l$, let $$S_{n}(l):=\sum_{i=1}^{n}x^{2}_{l,i}. $$Let $\mathcal{F}_{l}$ be the $\sigma$-algebra by the history up to time $l$.
\begin{proposition}\label{prop:1}
For any $l \geq 0$,
\be
\mathbb{E}\left(S_{n}(l+1)|\mathcal{F}_{l}\right)=\left(1-\frac{k-1}{n-1}\right)S_{n}(l).
\ee
\end{proposition}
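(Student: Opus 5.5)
Since $\bar{x}_0=0$ and the averaging move preserves $\sum_i x_{l,i}$, we have $\sum_{i=1}^n x_{l,i}=0$ for every $l$. The plan is to compute the one-step change of $S_n$ exactly for a fixed random $k$-subset, and then average over the uniformly chosen subset.

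\textbf{Step 1: exact one-step change.} Fix the subset $A=\{I_1,\dots,I_k\}$ and let $a=\frac1k\sum_{j\in A}x_{l,j}$. After the move, the coordinates in $A$ all equal $a$ and the others are unchanged. The usual variance decomposition gives
\be
S_n(l+1)-S_n(l)=k a^2-\sum_{j\in A}x_{l,j}^2=-\sum_{j\in A}(x_{l,j}-a)^2 .
\ee
Expanding $k a^2=\frac1k\bigl(\sum_{j\in A}x_{l,j}^2+2\sum_{\{i,j\}\subset A}x_{l,i}x_{l,j}\bigr)$ rewrites the change as
\be
-\frac{k-1}{k}\sum_{j\in A}x_{l,j}^2+\frac{2}{k}\sum_{\{i,j\}\subset A}x_{l,i}x_{l,j},
\ee
where the last sum runs over unordered pairs.

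\textbf{Step 2: average over $A$.} Conditionally on $\mathcal{F}_l$, the set $A$ is a uniform $k$-subset of $\{1,\dots,n\}$. Hence
\be
\mathbb{P}(j\in A)=\frac{k}{n},\qquad \mathbb{P}(\{i,j\}\subset A)=\frac{k(k-1)}{n(n-1)} .
\ee
This gives
\be
\mathbb{E}\bigl(S_n(l+1)-S_n(l)\mid\mathcal{F}_l\bigr)=-\frac{k-1}{n}S_n(l)+\frac{2(k-1)}{n(n-1)}\sum_{i<j}x_{l,i}x_{l,j}.
\ee

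\textbf{Step 3: use the zero-mean condition.} Because $\sum_i x_{l,i}=0$,
\be
2\sum_{i<j}x_{l,i}x_{l,j}=\Bigl(\sum_i x_{l,i}\Bigr)^2-S_n(l)=-S_n(l).
\ee
Substituting, the right-hand side becomes
\be
-(k-1)S_n(l)\Bigl(\frac1n+\frac1{n(n-1)}\Bigr)=-\frac{k-1}{n-1}S_n(l),
\ee
which is exactly the claimed identity.

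The only real care point is the bookkeeping in the pair term: ordered versus unordered pairs, and the factor $2/k$. Step 3 works only because of the reduction to $\bar{x}_0=0$ and the fact that each move preserves the total sum, so that fact should be stated explicitly.
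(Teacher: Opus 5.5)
Your argument is correct, but it is organized differently from the paper's.

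The paper works coordinate by coordinate. It computes $\mathbb{E}[x_{l+1,i}^2\mid\mathcal{F}_l]$ by conditioning on whether $i$ is chosen, and expands the squared average over all $\binom{n-1}{k-1}$ sets containing $i$. That needs the counts $\binom{n-2}{k-2}$ and $\binom{n-3}{k-3}$, i.e.\ pair and triple inclusion probabilities. It then uses the zero-sum condition to reduce to the form $\alpha\,x_{l,i}^2+\beta\,S_n(l)$, and only then sums over $i$, after which a somewhat lengthy cancellation yields $1-\frac{k-1}{n-1}$.

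You instead compute the change of the total directly, via the within-block identity $S_n(l+1)-S_n(l)=-\sum_{j\in A}(x_{l,j}-a)^2$. As a result:
\begin{itemize}
\item only singleton and pair inclusion probabilities are needed, and the zero-sum condition enters in a single line;
\item you avoid the triple-index term, which is where the paper's displayed expansion has small slips (the constraint there should read $p,q\neq i$, and the prefactor in the next line carries a stray factor $2$);
\item you avoid the $(n-2)$ denominators;
\item you make explicit the conservation of $\sum_i x_{l,i}$, which the paper uses silently;
\item you get for free the pathwise fact that $S_n$ is nonincreasing along every trajectory.
\end{itemize}

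What the paper's route buys in exchange is the finer per-coordinate formula for $\mathbb{E}[x_{l+1,i}^2\mid\mathcal{F}_l]$. That could be useful for tracking an individual coordinate, but only the summed identity is needed for the proposition and the corollary that follows.
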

\begin{proof}
For any $i \in [n]$, we perform the following computations
\be
\mathbb{E}\left[x^{2}_{l+1,i}|\mathcal{F}_{l}\right]&=\frac{{n-1\choose k}}{{n\choose k}}x^{2}_{l,i}+\frac{1}{{n\choose k}}\sum_{1 \leq i_1 < i_2\dots <i_{k-1}\leq n}\left(\frac{x_{l,i}+x_{l,i_1}+\dots+x_{l,i_{k-1}}}{k}\right)^2,\\
&=\left(\frac{n-k}{n}\right)x^{2}_{l,i}+\frac{{n-1\choose k-1}}{k^2{n\choose k}}x^{2}_{l,i}+\frac{{n-2\choose k-2}}{k^2{n\choose k}}\sum^{n}_{j=1 : j \neq i}x^{2}_{l,j}+\frac{2{n-2\choose k-2}}{k^2{n\choose k}}\sum^{n}_{j=1: j \neq i}x_{l,j}x_{l,i}+\frac{2{n-3\choose k-3}}{k^2{n\choose k}}\sum_{1\leq p<q \leq n:\text{ } p,q\neq n}x_{l,p}x_{l,q},\\
\ee
Since $\sum^{n}_{i=1}x_{l,i}=0,$ we have 
\be
\mathbb{E}\left[x^{2}_{l+1,i}|\mathcal{F}_{l}\right]&=\left(1-\frac{k}{n}+\frac{1}{nk}\right)x^{2}_{l,i}+\frac{k(k-1)\left(S_{n}(l)-x^{2}_{l,i}\right)}{k^2n(n-1)}-\frac{2k(k-1)x^{2}_{l,i}}{k^2n(n-1)}+\frac{2{n-3\choose k-3}}{k^2{n\choose k}}\left(2x^{2}_{l,i}-S_{n}(l)\right),\\
&=\left(1-\frac{k}{n}+\frac{1}{nk}-\frac{3(k-1)}{kn(n-1)}+\frac{2(k-1)(k-2)}{nk(n-1)(n-2)}\right)x^{2}_{l,i}+\left(\frac{(k-1)}{kn(n-1)}-\frac{(k-1)(k-2)}{kn(n-1)(n-2)}\right)S_{n}(l),\\
\ee
Summing over $i$, we obtain
\be
\mathbb{E}\left[S_{n}(l+1)|\mathcal{F}_{l}\right]&=\left(1-\frac{k}{n}+\frac{1}{nk}-\frac{3(k-1)}{kn(n-1)}+\frac{2(k-1)(k-2)}{nk(n-1)(n-2)}\right)S_{n}(l)+\left(\frac{(k-1)}{k(n-1)}-\frac{(k-1)(k-2)}{k(n-1)(n-2)}\right)S_{n}(l),\\
&=\left(1-\frac{k-1}{n-1}\right)S_{n}(l).
\ee
This provides us with the required identity.
\end{proof}
\begin{corollary}\label{coro:1}
If we denote $\tau:=1-\frac{k-1}{n-1},$ we have $\mathbb{E}\left(S_{n}(l)\right)=\tau^{l}S_{n}(0).$ Moreover, $\lim S_{n}(l)/\tau^{l}$ exists and is finite almost surely.
\end{corollary}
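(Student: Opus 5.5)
The first claim follows from Proposition~\ref{prop:1} by the tower property. Taking expectations in the identity of Proposition~\ref{prop:1} gives $\mathbb{E}(S_n(l+1))=\tau\,\mathbb{E}(S_n(l))$ for every $l\geq 0$. Since $S_n(0)$ is deterministic (the initial vector $x_0$ is fixed), induction on $l$ yields $\mathbb{E}(S_n(l))=\tau^l S_n(0)$. Throughout, $n$ is fixed with $n>k$ (so that $k$ distinct indices can be drawn), which forces $0<\tau<1$; in particular all the quantities below are well defined.

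For the almost sure statement, set $M_l:=S_n(l)/\tau^{l}$. This is $\mathcal{F}_l$-measurable and nonnegative, and it is integrable: its expectation is $S_n(0)<\infty$ by the first part. By Proposition~\ref{prop:1},
\be
\mathbb{E}(M_{l+1}\mid\mathcal{F}_l)=\tau^{-(l+1)}\,\tau\, S_n(l)=M_l ,
\ee
so $(M_l)_{l\geq 0}$ is a nonnegative martingale with respect to $(\mathcal{F}_l)$. A nonnegative martingale is an $L^1$-bounded martingale, because $\mathbb{E}|M_l|=\mathbb{E}M_l=S_n(0)$ for all $l$. Doob's martingale convergence theorem (or its special case for nonnegative supermartingales) then shows that $M_l$ converges almost surely to a limit $M_\infty$. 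Fatou's lemma gives $\mathbb{E}M_\infty\le S_n(0)<\infty$, so $M_\infty$ is finite almost surely.

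The main point to check is simply that $M_l$ is a genuine martingale, which needs two things. First, Proposition~\ref{prop:1} must hold at every step, and it does: the zero-mean normalization $\sum_i x_{l,i}=0$ is preserved by the averaging dynamics, so the computation applies for all $l$. Second, $\tau>0$, which holds since $k<n$. Beyond these checks there is no real obstacle. The limit $M_\infty$ could in principle be zero, but the corollary only asserts existence and finiteness, so we do not need to rule that out here.
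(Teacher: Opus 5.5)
Your proof is correct and matches the paper's argument: you take expectations in Proposition~\ref{prop:1} and induct to get the expectation identity, and you get the almost sure statement from the fact that $S_n(l)/\tau^{l}$ is a nonnegative martingale, which converges almost surely to a finite limit. One harmless slip: $n>k$ gives $\tau>0$, but $\tau<1$ holds only when $k\ge 2$ (for $k=1$ one has $\tau=1$), and your argument only needs $\tau>0$ anyway.
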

\begin{proof}
The first claim follows from \cref{prop:1} and induction. Now note that $M_{l}:=S_{n}(l)/\tau^{l}$ is a non-negative martingale, and so its limit exists and is finite almost surely.
\end{proof}
\phantomsection
  \addcontentsline{toc}{section}{References}
  \bibliographystyle{amsalpha}
  \bibliography{biblio.bib}
\end{document}